\newtheorem{theorem}{Theorem}[section]
\newtheorem{corollary}[theorem]{Corollary}
\newtheorem{proposition}[theorem]{Proposition}
\theoremstyle{definition}
\newtheorem{definition}[theorem]{Definition}
\newtheorem{remark}[theorem]{Remark}
\newcommand{\abar}{\bar{a}}
\newcommand{\bbar}{\bar{b}}
\newcommand{\cbar}{\bar{c}}
\def\seq{\subseteq}
\def\Def{\operatorname{Def}}
\def\tp{\operatorname{tp}}
\def\Av{\operatorname{Av}}
\def\R{\mathbb{R}}
\def\cL{\mathcal{L}}
\def\cU{\mathcal{U}}
\def\kM{\mathfrak{M}}
\def\miff{\makebox[.5in]{$\Leftrightarrow$}}
\newcommand{\claim}{\hfill$\dashv_{\text{\scriptsize{claim}}}$}
\title[Associativity of the Morley product in NIP theories]{Associativity of the Morley product of invariant measures in NIP theories}
\date{April 16, 2021}
\author[G. Conant]{Gabriel Conant}
\address{DPMMS\\
University of Cambridge\\
Cambridge CB3 0WB\\
 UK}
\email{gconant@maths.cam.ac.uk}
\author[K. Gannon]{Kyle Gannon}
\address{Department of Mathematics\\
University of California, Los Angeles\\
Los Angeles CA 90095\\
 USA}
\email{gannon@math.ucla.edu}
\thanks{GC was partially supported by NSF grant DMS-1855503. KG was partially supported by
NSF research grant DMS-1800806 and NSF CAREER grant DMS-1651321.}
\begin{document}

\begin{abstract}
In light of a gap  found by Krupi\'{n}ski, we give a new proof of associativity for the Morley (or ``nonforking") product of invariant measures in NIP theories.
\end{abstract}

\maketitle

Let $T$ be a complete first-order $\cL$-theory, and fix a sufficiently saturated monster model $\cU$. Given a tuple of variables $x$, we let $\kM_x(\cU)$ denote the space of global Keisler measures on $\Def_x(\cU)$.  Recall that $\kM_x(\cU)$ also corresponds to the space of regular Borel probability measures on the Stone space $S_x(\cU)$ of global types in $x$. 

A measure $\mu\in\kM_x(\cU)$ is \textbf{invariant} if there is a small model $M\prec\cU$ such that, for any $\cL$-formula $\phi(x,y)$ and $b,b'\in\cU^y$, $b\equiv_M b'$ implies $\mu(\phi(x,b))=\mu(\phi(x,b'))$. If $\mu\in\kM_x(\cU)$ is $M$-invariant, and $\phi(x,y)$ is an $\cL_M$-formula, then we have a well-defined function $F^\phi_\mu\colon S_y(M)\to[0,1]$ such that $F^\phi_\mu(q)=\mu(\phi(x,b))$ for some/any $b\models q$. Then $\mu$ is \textbf{Borel-definable} (over $M$) if $F^\phi_\mu$ is a Borel function for any  $\phi(x,y)$. If each of these maps is \emph{continuous}, then $\mu$ is called \textbf{definable} over $M$.

Now fix measures $\mu\in\kM_x(\cU)$ and $\nu\in\kM_y(\cU)$, and assume $\mu$ is Borel-definable (over some small model). The \textbf{Morley product} $\mu\otimes\nu$ (originally defined by Hrushovski and Pillay in \cite{HP}) is constructed as follows. Given an $\cL_{\cU}$-formula $\phi(x,y)$, let $M\prec\cU$ be a small model that contains the parameters in $\phi(x,y)$, and is such that $\mu$ is Borel-definable over $M$. Then
\[
(\mu\otimes\nu)(\phi(x,y))=\int_{S_y(M)}F^\phi_\mu\, d\nu.
\]
One can verify that this does not depend on the choice of $M$, and yields a well-defined Keisler measure in $\kM_{xy}(\cU)$. Moreover, if $\nu$ is $M$-invariant then so is $\mu\otimes\nu$.

Now assume $T$ is NIP. In this case, any $M$-invariant Keisler measure is automatically Borel-definable over $M$ (see \cite[Corollary 4.9]{HP} or \cite[Proposition 7.19]{NIPguide}), and so one can iterate the Morley product. This naturally raises the question of associativity. In \cite[Chapter 7]{NIPguide}, a proof of associativity is sketched, but a gap in the proof was recently found by Krupi\'{n}ski. The purpose of this note is to provide a new proof, which relies heavily on fundamental properties of smooth measures (summarized in Section \ref{sec:prelim}). In Section \ref{sec:main}, we review the proof of associativity for types (which motivates the proof for measures) and then prove the main result. In Section \ref{sec:extra}, we sketch a second proof of associativity, and then we explain the subtlety in the proof sketch from \cite{NIPguide}. This topic is further explored in \cite{CGH} (joint with J. Hanson) where, among other things, we show that the Morley product of Keisler measures can fail to be associative outside of the NIP setting.

\section{Preliminaries on smooth measures}\label{sec:prelim}

Given a tuple $\abar\in (\cU^x)^n$, define $\Av(\abar)\in\kM_x(\cU)$ such that
\[
\textstyle \Av(\abar)(\phi(x))=\frac{1}{n}|\{1\leq i\leq n:\cU\models\phi(a_i)\}|
\]
for any $\cL_{\cU}$-formula $\phi(x)$.
Given $r,s\in\R$ and $\epsilon>0$, we write $r\approx_\epsilon s$ if $|r-s|<\epsilon$.

\begin{definition}
Fix $\mu\in\kM_x(\cU)$ and $M\prec\cU$.
\begin{enumerate}
\item $\mu$ is \textbf{smooth} over $M$ if $\mu$ is the unique measure in $\kM_x(\cU)$ extending $\mu|_M$.
\item $\mu$ is \textbf{finitely approximated} in $M$ if for any $\cL$-formula $\phi(x,y)$ and $\epsilon>0$, there is a tuple $\abar\in (M^x)^n$ such that $\Av(\abar)(\phi(x,b))\approx_\epsilon \mu(\phi(x,b))$ for any $b\in\cU^y$. In this case, we call $\abar$ a \textbf{$(\phi(x,y),\epsilon)$-approximation for $\mu$}.
\end{enumerate}
\end{definition}

Given measures $\mu\in\kM_x(\cU)$ and $\nu\in\kM_y(\cU)$, we say that $\lambda\in\kM_{xy}(\cU)$ is a \textbf{separated amalgam} of $\mu$ and $\nu$ if $\lambda(\phi(x)\wedge\psi(y))=\mu(\phi(x))\nu(\psi(y))$ for any $\cL_{\cU}$-formulas $\phi(x)$ and $\psi(y)$. 

\begin{proposition}\label{prop:smooth}
Suppose $\mu\in\kM_x(\cU)$ is smooth over $M\prec\cU$.
\begin{enumerate}[$(a)$]
\item Let $\phi(x,y)$ be an $L$-formula and fix $\epsilon>0$. Then there are $\cL_M$-formulas $\theta^-_1(x),\ldots,\theta^-_n(x),\theta^+_1(x),\ldots,\theta^+_n(x),\psi_1(y),\ldots,\psi_n(y)$, for some $n\geq 1$, such that:
\begin{enumerate}[$(i)$]
\item the formulas $\psi_1(y),\ldots,\psi_n(y)$ partition $\cU^y$,
\item for all $i\leq n$, if $\cU\models\psi_i(b)$ then $\theta^-_i(x)\seq\phi(x,b)\seq\theta^+_i(x)$, and
\item for all $i\leq n$, $\mu(\theta^+_i(x))-\mu(\theta^-_i(x))<\epsilon$.
\end{enumerate}
Moreover, this implies $\mu$ is definable over $M$.
\item If $\nu\in\kM_y(\cU)$ then $\mu\otimes\nu$ is the unique separated amalgam of $\mu$ and $\nu$ in $\kM_{xy}(\cU)$. In particular, if $\nu$ is Borel-definable, then $\mu\otimes\nu=\nu\otimes\mu$.
\item $\mu$ is finitely approximated in $M$.
\end{enumerate}
\end{proposition}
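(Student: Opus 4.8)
The three parts are naturally proved in order, with (a) carrying essentially all the weight. For (a) the plan is to measure the gap between the inner and outer approximations of $\phi(x,b)$ by $\cL_M$-formulas and to argue that smoothness forces this gap to vanish. For $b\in\cU^y$ set
\[
d^-(b)=\sup\{\mu(\theta(x)):\theta\in\cL_M,\ \theta(x)\seq\phi(x,b)\},\qquad d^+(b)=\inf\{\mu(\theta(x)):\theta\in\cL_M,\ \phi(x,b)\seq\theta(x)\}.
\]
These depend only on $\tp(b/M)$ and satisfy $d^-(b)\leq\mu(\phi(x,b))\leq d^+(b)$. I claim $d^-(b)=d^+(b)$ for every $b$. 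Let $E\seq\kM_x(\cU)$ be the (compact, convex, nonempty) set of global measures extending $\mu|_M$; by smoothness $E=\{\mu\}$. For $\lambda\in E$, monotonicity gives $d^-(b)\leq\lambda(\phi(x,b))\leq d^+(b)$; conversely every value $v\in[d^-(b),d^+(b)]$ is of the form $\lambda(\phi(x,b))$ for some $\lambda\in E$, obtained by first defining a finitely additive measure on the Boolean algebra generated by the $\cL_M$-formulas together with $\phi(x,b)$ via $\phi(x,b)\mapsto v$ (the definitions of $d^\pm(b)$ are exactly what makes this assignment consistent with $\mu|_M$) and then extending it to a global Keisler measure by the standard fact that a finitely additive measure on a subalgebra extends to the whole algebra. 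Since $E=\{\mu\}$, the interval collapses and $d^-(b)=d^+(b)=\mu(\phi(x,b))$. This existence-of-extensions step is the crux of the whole proposition; everything else is bookkeeping.

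With $d^-=d^+$ established, I would pass from this pointwise statement to the uniform partition by compactness of $S_y(M)$. Fix $\epsilon>0$. For each $q\in S_y(M)$ choose $b\models q$ and $\cL_M$-formulas $\theta^-,\theta^+$ with $\theta^-\seq\phi(x,b)\seq\theta^+$ and $\mu(\theta^+)-\mu(\theta^-)<\epsilon$, which is possible since $d^-(b)=d^+(b)$. The containments are $\cL_M$-conditions on $y$ that hold of $b$, hence lie in $q$; let $\psi_q(y)\in q$ be their conjunction. The clopen sets determined by the $\psi_q$ cover $S_y(M)$, so finitely many suffice, and disjointifying yields a partition $\psi_1,\dots,\psi_n$ with matching $\theta^\pm_i$, proving (i)--(iii). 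For the moreover clause, on each piece $F^\phi_\mu$ lies within $\epsilon$ of the constant $\mu(\theta^-_i)$, so as $\epsilon\to 0$ the function $F^\phi_\mu$ is a uniform limit of locally constant functions and is therefore continuous; thus $\mu$ is definable over $M$.

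For (b), I would first verify that $\mu\otimes\nu$ is a separated amalgam: for $\cL_\cU$-formulas $\phi(x)$ and $\psi(y)$ the map $F^{\phi\wedge\psi}_\mu$ equals $\mu(\phi(x))$ on $[\psi]$ and vanishes elsewhere, so integrating against $\nu$ gives $\mu(\phi(x))\nu(\psi(y))$ (this uses only Borel-definability). For uniqueness, let $\lambda$ be any separated amalgam, fix $\phi(x,y)$ and $\epsilon>0$, and apply (a). Whenever $\models\psi_i(b)$ we have $\theta^-_i(x)\seq\phi(x,b)\seq\theta^+_i(x)$, hence $\theta^-_i(x)\wedge\psi_i(y)\seq\phi(x,y)\wedge\psi_i(y)\seq\theta^+_i(x)\wedge\psi_i(y)$ in $\cU^{xy}$; the separated-amalgam property evaluates the two outer sets to $\mu(\theta^\pm_i)\nu(\psi_i)$, which differ by at most $\epsilon\,\nu(\psi_i)$. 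Summing over the partition shows $\lambda(\phi(x,y))$ is within $\epsilon$ of $\sum_i\mu(\theta^-_i)\nu(\psi_i)$, and the identical estimate holds for $\mu\otimes\nu$, so $|\lambda(\phi)-(\mu\otimes\nu)(\phi)|<2\epsilon$; letting $\epsilon\to 0$ gives $\lambda=\mu\otimes\nu$. Finally, if $\nu$ is Borel-definable then $\nu\otimes\mu$ is also a separated amalgam of $\mu$ and $\nu$, so uniqueness yields $\mu\otimes\nu=\nu\otimes\mu$.

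For (c), I would deduce finite approximation from (a) and the realizability of positive-measure $\cL_M$-formulas, with no appeal to the VC theorem. Given $\phi(x,y)$ and $\epsilon>0$, take the finitely many formulas $\theta^\pm_1,\dots,\theta^\pm_n$ from (a); it suffices to find $\abar\in(M^x)^N$ with $\Av(\abar)(\theta^\pm_i)\approx_\epsilon\mu(\theta^\pm_i)$ for all $i$, since the sandwiching then transfers this approximation (up to $2\epsilon$) to every $\phi(x,b)$. To build $\abar$, pass to the finite Boolean algebra generated by the $\theta^\pm_i$: each atom $\rho(x)$ with $\mu(\rho)>0$ is a consistent $\cL_M$-formula and so is realized by a point of $M$ since $M\prec\cU$. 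Taking such witnesses with multiplicities given by rational approximations to the values $\mu(\rho)$ produces an average matching $\mu$ on every atom, hence on each $\theta^\pm_i$, to within $\epsilon$, as required.
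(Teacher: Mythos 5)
Your proof is correct and follows essentially the same route as the sources the paper cites for this result (Lemma 2.3 and Corollary 2.5 of \cite{HPS} for parts $(a)$ and $(b)$, and \cite[Proposition 7.10]{NIPguide} for part $(c)$): the extension-of-finitely-additive-measures argument forcing the inner and outer $\cL_M$-approximations of $\phi(x,b)$ to agree, compactness over $S_y(M)$ followed by disjointification, sandwiching against the partition for uniqueness of the separated amalgam, and realizing the positive-measure atoms in $M$ with suitable multiplicities for finite approximation. The only point worth tightening is in part $(b)$: uniqueness must be verified on arbitrary $\cL_\cU$-formulas, so before invoking $(a)$ one should absorb the parameters into the $y$-variable, exactly the reduction the paper spells out explicitly in the proof of Corollary \ref{cor:smoothprod}.
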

\begin{proof}
See Lemma 2.3 and Corollary 2.5 of \cite{HPS} for parts $(a)$ and $(b)$. As noted in \cite{HPS}, the symmetry claim in part $(b)$ follows since $\mu\otimes\nu$ and $\nu\otimes\mu$ are both separated amalgams of $\mu$ and $\nu$.
See \cite[Proposition 7.10]{NIPguide} for part $(c)$.
\end{proof}

Part $(c)$ of Proposition \ref{prop:smooth} is also evident from the proof of \cite[Corollary 2.6]{HPS} (see also \cite[Corollary 2.8]{HPS}). The next result is  \cite[Corollary 3.17]{StarBk}, which is stated without proof, and so we take the opportunity here to provide details.

\begin{corollary}\label{cor:smoothprod}
 If $\mu\in\kM_x(\cU)$ and $\nu\in\kM_y(\cU)$ are smooth over $M\prec\cU$, then $\mu\otimes\nu$ is smooth over $M\prec\cU$.
 \end{corollary}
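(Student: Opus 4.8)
The plan is to verify smoothness of $\mu\otimes\nu$ via the sandwiching criterion implicit in Proposition \ref{prop:smooth}$(a)$: a measure $\lambda\in\kM_{xy}(\cU)$ is smooth over $M$ if and only if for every $\cL_\cU$-formula $\chi(x,y)$ and every $\epsilon>0$ there are $\cL_M$-formulas $\Xi^-(x,y)\seq\chi(x,y)\seq\Xi^+(x,y)$ with $\lambda(\Xi^+)-\lambda(\Xi^-)<\epsilon$. The forward implication is part $(a)$; for the reverse, any $\lambda'\in\kM_{xy}(\cU)$ extending $\lambda|_M$ agrees with $\lambda$ on the $\cL_M$-formulas $\Xi^\pm$, so monotonicity forces $\lambda'(\chi),\lambda(\chi)\in[\lambda(\Xi^-),\lambda(\Xi^+)]$, an interval of length $<\epsilon$. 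Thus, writing $\chi(x,y)=\phi(x,y,c)$ with $\phi\in\cL$ and $c\in\cU$, it suffices to construct such a sandwich for $\lambda=\mu\otimes\nu$.

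First I would apply Proposition \ref{prop:smooth}$(a)$ to $\mu$, using the $\cL$-formula $\phi(x;y,z)$ with $(y,z)$ as the parameter variables, to obtain $\cL_M$-formulas $\theta^-_i(x),\theta^+_i(x)$ and a partition $\psi_1(y,z),\dots,\psi_n(y,z)$ such that $\models\psi_i(b,c')$ implies $\theta^-_i(x)\seq\phi(x,b,c')\seq\theta^+_i(x)$, while $\mu(\theta^+_i)-\mu(\theta^-_i)<\epsilon$. Specializing $z$ to $c$, the formulas $\psi_i(y,c)$ partition $\cU^y$, and $\Theta^\pm(x,y):=\bigvee_{i=1}^n(\theta^\pm_i(x)\wedge\psi_i(y,c))$ sandwich $\chi$. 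Evaluating the rectangles via the separated amalgam property (Proposition \ref{prop:smooth}$(b)$, available since $\mu$ is smooth) and using that the $\psi_i(y,c)$ are disjoint with $\nu$-measures summing to $1$, I obtain $(\mu\otimes\nu)(\Theta^+)-(\mu\otimes\nu)(\Theta^-)=\sum_i(\mu(\theta^+_i)-\mu(\theta^-_i))\,\nu(\psi_i(y,c))<\epsilon$.

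The only real obstacle is that $\Theta^\pm$ are not $\cL_M$-formulas, as they mention $c$ through the $\psi_i(y,c)$. To eliminate $c$ I would use smoothness of $\nu$: for each $i$, Proposition \ref{prop:smooth}$(a)$ applied to $\nu$ and $\psi_i(y,z)$ produces $\cL_M$-formulas $\rho^-_i(y)\seq\psi_i(y,c)\seq\rho^+_i(y)$ with $\nu(\rho^+_i)-\nu(\rho^-_i)<\delta$, for a parameter $\delta$ to be chosen at the end. Then $\Xi^\pm(x,y):=\bigvee_{i=1}^n(\theta^\pm_i(x)\wedge\rho^\pm_i(y))$ are genuine $\cL_M$-formulas, and a short case analysis (using that exactly one $\psi_i(y,c)$ holds at each point of $\cU^y$) shows $\Xi^-\seq\chi\seq\Xi^+$, together with $\Xi^-\seq\Theta^-$ and $\Theta^+\seq\Xi^+$.

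It remains to bound the error introduced by this replacement. The containments above give $\Xi^+\wedge\neg\Theta^+\seq\bigvee_i\theta^+_i(x)\wedge(\rho^+_i(y)\wedge\neg\psi_i(y,c))$ and $\Theta^-\wedge\neg\Xi^-\seq\bigvee_i\theta^-_i(x)\wedge(\psi_i(y,c)\wedge\neg\rho^-_i(y))$, so subadditivity and the separated amalgam property yield $(\mu\otimes\nu)(\Xi^+)-(\mu\otimes\nu)(\Theta^+)\leq\sum_i\mu(\theta^+_i)(\nu(\rho^+_i)-\nu(\psi_i(y,c)))\leq n\delta$ and likewise $(\mu\otimes\nu)(\Theta^-)-(\mu\otimes\nu)(\Xi^-)\leq n\delta$. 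Combining with the middle estimate gives $(\mu\otimes\nu)(\Xi^+)-(\mu\otimes\nu)(\Xi^-)<\epsilon+2n\delta$, and choosing $\delta<\epsilon/(2n)$ makes this $<2\epsilon$. As $\epsilon$ is arbitrary, the criterion is met and $\mu\otimes\nu$ is smooth over $M$. The one point to keep track of is the order of quantifiers: $n$ is determined by $\epsilon$ before $\delta$ is chosen, so the factor $n$ multiplying $\delta$ causes no trouble.
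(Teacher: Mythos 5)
Your argument is correct, but it takes a different route from the paper's. The paper does not attempt to sandwich an arbitrary $\cL_\cU$-formula $\chi(x,y)$ between $\cL_M$-formulas; instead it takes an arbitrary $\lambda\in\kM_{xy}(\cU)$ with $\lambda|_M=(\mu\otimes\nu)|_M$ and shows that $\lambda$ is a separated amalgam of $\mu$ and $\nu$, which by the uniqueness statement in Proposition \ref{prop:smooth}$(b)$ forces $\lambda=\mu\otimes\nu$. That reduction means the sandwiching only ever has to be carried out for rectangles $\phi(x)\wedge\psi(y)$, where it amounts to one application of Proposition \ref{prop:smooth}$(a)$ to each coordinate and a two-line estimate on $\mu(\theta^+)\nu(\chi^+)-\mu(\theta^-)\nu(\chi^-)$. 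Your proof leans only on the \emph{existence} half of $(b)$ (that $\mu\otimes\nu$ is a separated amalgam, which is immediate from the definition of the product) and replaces the appeal to uniqueness by an explicit two-stage approximation: first the partition $\psi_i(y,z)$ coming from smoothness of $\mu$, then a second sandwich $\rho_i^\pm$ coming from smoothness of $\nu$ to strip the parameter $c$ out of $\psi_i(y,c)$, with the $\delta<\epsilon/(2n)$ bookkeeping you correctly flag. The price is a longer combinatorial verification (in effect you re-derive inline a special case of the uniqueness-of-separated-amalgams machinery from \cite{HPS}); what you gain is a proof that exhibits concrete $\cL_M$-witnesses to smoothness of $\mu\otimes\nu$ for every formula, i.e.\ it directly establishes the conclusion of Proposition \ref{prop:smooth}$(a)$ for the product rather than obtaining smoothness abstractly. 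One small presentational point: when you apply Proposition \ref{prop:smooth}$(a)$ to $\nu$ and $\psi_i(y,z)$, the latter is an $\cL_M$-formula rather than an $\cL$-formula, so you should (as the paper does in its own proof) absorb the $M$-parameters into the parameter variables before invoking $(a)$; this is routine but worth a sentence.
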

 \begin{proof}
Suppose $\lambda\in\kM_{xy}(\cU)$ is such that $\lambda|_M=(\mu\otimes\nu)|_M$. We want to show that $\lambda=\mu\otimes\nu$. By Proposition \ref{prop:smooth}$(b)$,   it suffices to show that $\lambda$ is a separated amalgam of $\mu$ and $\nu$. 
 So fix $\cL_\cU$-formulas $\phi(x)$ and $\psi(y)$. Fix $\epsilon>0$. By Proposition \ref{prop:smooth}$(a)$, there are $\cL_M$-formulas $\theta^-(x)$, $\theta^+(x)$, $\chi^-(y)$, $\chi^+(y)$ such that:
 \begin{enumerate}[$(i)$]
 \item $\theta^-(x)\seq\phi(x)\seq\theta^+(x)$ and $\chi^-(y)\seq\psi(y)\seq \chi^+(y)$;
 \item $\mu(\theta^+(x))-\mu(\theta^-(x))<\epsilon$ and $\nu(\chi^+(y))-\nu(\chi^-(y))<\epsilon$.
 \end{enumerate}
 (For example, write $\phi(x)$ as $\phi_0(x,b)$ for some $\cL$-formula $\phi_0(x,z)$ and $b\in\cU^z$, and obtain $\theta^-_i(x)$, $\theta^+_i(x)$, $\psi_i(z)$ by applying Proposition \ref{prop:smooth}$(a)$ to $\phi_0(x,z)$ and $\epsilon$. Then choose $i$ such that $\psi_i(b)$ holds, and let $\theta^-(x)=\theta^-_i(x)$ and $\theta^+(x)=\theta^+_i(x)$.)
 
Note that  $\theta^-(x)\wedge\chi^-(y)\seq\phi(x)\wedge\psi(y)\seq\theta^+(x)\wedge\chi^+(y)$. Since $\lambda|_M=(\mu\otimes\nu)|_M$, we have
 \begin{align*}
 \lambda(\theta^-(x)\wedge\chi^-(y)) &=(\mu\otimes\nu)(\theta^-(x)\wedge\chi^-(y))=:r\text{, and}\\
 \lambda(\theta^+(x)\wedge\chi^+(y)) &=(\mu\otimes\nu)(\theta^+(x)\wedge\chi^+(y))=:s.
 \end{align*}
 So $\lambda(\phi(x)\wedge\psi(y)),(\mu\otimes\nu)(\phi(x)\wedge\psi(y))\in [r,s]$. Morever,
 \begin{multline*}
 s-r=\mu(\theta^+(x))\nu(\chi^+(y))-\mu(\theta^-(x))\nu(\chi^-(y))\\
 =\mu(\theta^+(x))\bigg(\nu(\chi^+(y))-\nu(\chi^-(y))\bigg)+\nu(\chi^-(y))\bigg(\mu(\theta^+(x))-\mu(\theta^-(x))\bigg)<2\epsilon.
 \end{multline*}
Therefore $\lambda(\phi(x)\wedge\psi(y))\approx_{2\epsilon}(\mu\otimes\nu)(\phi(x)\wedge\psi(y))$. Since $\epsilon>0$ was arbitrary, we have the desired result.
 \end{proof}

Finally, we recall a main result about NIP theories, namely, the existence of smooth extensions (see \cite[Theorem 3.26]{Keis} or \cite[Proposition 7.9]{NIPguide}).

\begin{theorem}[Keisler]\label{thm:NIPsmooth}
Assume $T$ is NIP. Given $\mu\in\kM_x(\cU)$ and $M\prec\cU$, there is $\nu\in\kM_x(\cU)$ such that $\mu|_M=\nu|_M$ and $\nu$ is smooth over some $N\succ M$.
\end{theorem}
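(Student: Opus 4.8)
The plan is to reduce the statement to a purely ``sandwiching'' problem via (the converse of) Proposition~\ref{prop:smooth}$(a)$, and then to build the desired extension by transfinitely smoothening $\mu|_M$ along an increasing elementary chain, with NIP entering only to guarantee that the smoothening process terminates.

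First I would record the converse of Proposition~\ref{prop:smooth}$(a)$, which is elementary. Suppose $\lambda\in\kM_x(\cU)$ and $N\prec\cU$ are such that for every $\cL$-formula $\phi(x,y)$ and every $\epsilon>0$ there are $\cL_N$-formulas $\theta^-_i(x),\theta^+_i(x),\psi_i(y)$ (for $i\leq n$) satisfying conditions $(i)$--$(iii)$ of Proposition~\ref{prop:smooth}$(a)$ with $\lambda$ in place of $\mu$. Then $\lambda$ is smooth over $N$: given any $\lambda'$ with $\lambda'|_N=\lambda|_N$ and any $b\in\cU^y$, choosing $i$ with $\psi_i(b)$ traps both $\lambda(\phi(x,b))$ and $\lambda'(\phi(x,b))$ in $[\lambda(\theta^-_i),\lambda(\theta^+_i)]$, an interval of length $<\epsilon$ whose endpoints are determined by $\lambda|_N$; letting $\epsilon\to 0$ gives $\lambda'=\lambda$. (This is the sandwiching argument already used in Corollary~\ref{cor:smoothprod}.) Consequently, it suffices to produce a model $N\succ M$ and a measure $\nu_0$ on $\Def_x(N)$ extending $\mu|_M$ and satisfying this sandwich condition for all $\phi$ and $\epsilon$; any global $\nu$ with $\nu|_N=\nu_0$ is then smooth over $N$ and satisfies $\nu|_M=\mu|_M$.

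To build $\nu_0$, for a measure $\nu$ on $\Def_x(N)$ and a formula $\phi(x,y)$ I would define the \emph{$\phi$-defect}
\[
D_\phi(\nu,N)=\sup_{b\in\cU^y}\ \inf\big\{\,\nu(\theta^+(x))-\nu(\theta^-(x)):\theta^-,\theta^+\in\cL_N,\ \theta^-(x)\seq\phi(x,b)\seq\theta^+(x)\,\big\}.
\]
By compactness, $D_\phi(\nu,N)<\epsilon$ is equivalent to the existence of a finite sandwich partition as in $(i)$--$(iii)$, so the target condition on $\nu_0$ is exactly $D_\phi(\nu_0,N)=0$ for every $\phi$. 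I would realize $N$ and $\nu_0$ as the union of a continuous increasing elementary chain $M=N_0\prec N_1\prec\cdots$ carrying coherent measures $\nu_i$ on $\Def_x(N_i)$ extending $\mu|_M$, processing the pairs $(\phi,1/k)$ according to a fair bookkeeping. Two points make the limit work. First, \emph{persistence}: if $D_\phi(\nu_i,N_i)<1/k$, then the witnessing partition consists of $\cL_{N_i}$-formulas whose measures are unchanged under any coherent extension, while passing to a larger model only enlarges the class of candidate sandwiching formulas; hence $D_\phi(\nu_{i+1},N_{i+1})\leq D_\phi(\nu_i,N_i)<1/k$ at all later stages, and likewise $D_\phi(\nu_0,N)<1/k$ at the limit. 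Second, since $k$ ranges over all of $\N$, the limit measure satisfies $D_\phi(\nu_0,N)=0$ for every $\phi$, as required.

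The hard part, and the only place NIP is used, is the \textbf{single-formula reduction step}: given $\nu_i$ over $N_i$ with $D_\phi(\nu_i,N_i)\geq 1/k$, one must find $N_{i+1}\succ N_i$ and a coherent extension $\nu_{i+1}$ with $D_\phi(\nu_{i+1},N_{i+1})<1/k$. The subtlety, and where I expect the main obstacle to lie, is that merely adjoining one ``bad'' parameter $b_0$ (where the inner infimum is $\geq 1/k$) and pinning down $\nu(\phi(x,b_0))$ kills the defect at $b_0$ but not the supremum over all $b$. To overcome this I would attach to $(\phi,\nu_i,1/k)$ a rank counting the maximal length of an alternating chain of sandwiching approximations whose measures jump by at least $1/k$, show that this rank is finite using that $\phi$ has finite VC dimension (so no such chain can be arbitrarily long without contradicting NIP), and verify that each refinement step either achieves $D_\phi<1/k$ or strictly decreases the rank. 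Finiteness of the rank then bounds the number of refinement steps needed for each $(\phi,1/k)$, which both justifies the single-step reduction and keeps the chain short; assembling these steps under the bookkeeping yields $\nu_0$, and hence the smooth extension $\nu$. This recovers the cited results of Keisler and Simon.
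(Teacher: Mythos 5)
The paper does not actually prove this theorem: it is quoted with citations to \cite{Keis} and \cite[Proposition 7.9]{NIPguide}. Measured against the standard proof, your outline gets the skeleton right. The ``converse of Proposition \ref{prop:smooth}$(a)$'' is indeed the correct criterion for smoothness (it is essentially \cite[Lemma 7.8]{NIPguide}), your compactness argument converting ``every instance $\phi(x,b)$ admits an $\cL_N$-sandwich of defect $<\epsilon$'' into a finite partition works, and the transfinite chain with bookkeeping together with the persistence of $D_\phi<1/k$ under coherent extensions is the standard architecture. The genuine gap is that the one place where NIP enters --- termination of the smoothening process --- is delegated to an undefined ``rank counting the maximal length of an alternating chain of sandwiching approximations,'' together with two unproved assertions: that this rank is finite by NIP, and that each refinement step decreases it (which, if the rank is read as ``the number of further bad steps possible,'' is circular). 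This is not a routine verification; it is the entire mathematical content of the theorem.

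To close the gap two ingredients are needed, neither of which appears in your sketch. First, the refinement step must be performed carefully: when $D_\phi(\nu_i,N_i)\geq 1/k$ is witnessed by some $b_i$, it is not enough to ``pin down'' $\nu_{i+1}(\phi(x,b_i))$ arbitrarily --- for instance, extending by the outer measure makes $\phi(x,b_i)$ have arbitrarily small $\nu_{i+1}$-symmetric difference with some $\cL_{N_i}$-formula, which destroys the intended obstruction. One should instead take $\nu_{i+1}$ to be the average of two extensions of $\nu_i$ realizing the inner and outer measures of $\phi(x,b_i)$; a short computation then yields $\nu'(\phi(x,b_i)\triangle\theta(x))\geq 1/2k$ for every $\cL_{N_i}$-formula $\theta(x)$ and every later coherent extension $\nu'$. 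Second, and this is the heart of the matter, one needs the combinatorial lemma that in an NIP theory a family $\{\phi(x,b_i)\}_{i\in I}$ that is pairwise $\delta$-separated in $L^1$ of a Keisler measure must be finite (of size bounded in terms of $\delta$ and the VC dimension of $\phi$); applying this with $\theta=\phi(x,b_j)$ for $j<i$ bounds the number of bad steps attached to each pair $(\phi,1/k)$ and forces termination. That lemma is proved either by transferring Haussler's packing bound to Keisler measures via finite approximations of the measure on the relevant finite algebra, or, as in \cite{NIPguide}, by running the construction long enough to extract an indiscernible sequence and deriving an alternation contradiction with NIP. As written, your proposal asserts the conclusion of this lemma but supplies no argument for it, so it does not yet constitute a proof.
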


\section{Associativity}\label{sec:main}

Before starting the proof, we briefly recall the argument for associativity of the Morley product for invariant \emph{types} (which holds in any theory). 

Fix global types $p\in S_x(\cU)$ and $q\in S_y(\cU)$, and assume $p$ is invariant over some small model. Then an $\cL_{\cU}$-formula $\phi(x,y)$ is in $p\otimes q$ if and only if $\phi(x,b)\in p$ for some/any $b\models q|_M$, where $M\prec\cU$ contains any parameters in $\phi(x,y)$ and $p$ is $M$-invariant. Now assume $q$ is invariant, and fix a third type $r\in S_z(\cU)$. Let $\phi(x,y,z)$ be an $\cL_{\cU}$-formula, and choose $M\prec\cU$ such that $\phi(x,y,z)$ is over $M$ and $p$ and $q$ are $M$-invariant. Let $c\models r|_M$ and $b\models q|_N$, where $N\prec\cU$ contains $Mc$. Then it is straightforward to show that $(b,c)\models (q\otimes r)|_M$, and thus  $\phi(x,y,z)\in p\otimes(q\otimes r)$ if and only if $\phi(x,b,c)\in p$. On the other hand, since $\phi(x,y,c)$ is over $N$, 
\[
\phi(x,y,z)\in (p\otimes q)\otimes r\miff \phi(x,y,c)\in p\otimes q\miff \phi(x,b,c)\in p.
\]

The proof of associativity for measures follows the same rough strategy, although the individual steps become more intricate. In the above argument, $\tp(b/\cU)$ and $\tp(c/\cU)$ are \emph{isolated} global types extending $q|_N$ and $r|_M$, respectively. For the case of measures, we will replace these by \emph{smooth extensions} (recall that a global type is smooth if and only if it is isolated). Thus we will need to make an NIP assumption to know that such extensions exist (via Theorem \ref{thm:NIPsmooth}). Also, in place of realizations of isolated global types, we will use $\epsilon$-approximations of smooth measures (via Proposition \ref{prop:smooth}$(c)$). Finally, when adapted to measures, several trivial maneuvers in the argument for types require the results in Section \ref{sec:prelim}. For example, the proof above implicitly uses the obvious facts that isolated types commute with any invariant type, and that the product of two isolated types is isolated.

Next, we record a few easy observations.

 \begin{remark}\label{rem:easy}$~$
 \begin{enumerate}[$(a)$]
\item If $\mu\in\kM_x(\cU)$ is Borel definable over $M\prec\cU$, and $\nu,\nu'\in\kM_y(\cU)$ are such that $\nu|_M=\nu'|_M$ then $(\mu\otimes\nu)|_M=(\mu\otimes\nu')|_M$.
\item If $\mu\in\kM_x(\cU)$ is finitely approximated in $M\prec\cU$, $\phi(x,y)$ is an $\cL_M$-formula, and $\abar\in(M^x)^n$ is a $(\phi(x;y),\epsilon)$-approximation for $\mu$ then, for any  $\nu\in\kM_y(\cU)$, 
\[
\textstyle(\mu\otimes\nu)(\phi(x,y))\approx_\epsilon(\Av(\abar)\otimes\nu)(\phi(x,y))=\frac{1}{n}\sum\limits_{i=1}^n\nu(\phi(a_i,y)).
\]
\end{enumerate}
\end{remark}

We now prove the main result.

\begin{theorem}\label{thm:main}
Assume $T$ is NIP, and suppose $\mu\in\kM_x(\cU)$, $\nu\in\kM_y(\cU)$, and $\lambda\in\kM_z(\cU)$. If $\mu$ and $\nu$ are invariant, then $\mu\otimes(\nu\otimes\lambda)=(\mu\otimes\nu)\otimes\lambda$.
\end{theorem}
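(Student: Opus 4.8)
The plan is to mirror the argument for types recalled just above, using Theorem \ref{thm:NIPsmooth} to replace the isolated extensions by smooth measures and Proposition \ref{prop:smooth}$(c)$ to replace realizations by finite approximations. Fix an $\cL_\cU$-formula $\phi(x,y,z)$ and a small model $M$ containing its parameters and over which $\mu$ and $\nu$ are invariant (hence Borel-definable, by NIP). It suffices to show that $(\mu\otimes(\nu\otimes\lambda))(\phi)$ and $((\mu\otimes\nu)\otimes\lambda)(\phi)$ are within $O(\epsilon)$ of a common quantity for every $\epsilon>0$. First I would reduce to the case where $\lambda$ is smooth: by Theorem \ref{thm:NIPsmooth} choose $\lambda$ smooth over some $N\succeq M$ with the same restriction to $M$; repeated applications of Remark \ref{rem:easy}$(a)$ (changing the second factor in each nested product) show that neither side changes value on $\phi$. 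I would then fix, again via Theorem \ref{thm:NIPsmooth}, a smooth extension $\nu_N$ of $\nu|_N$, smooth over some $N'\succeq N$, noting that $\lambda$ is also smooth over $N'$.

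The common target will be the double average $\frac{1}{kl}\sum_{i,j}\mu(\phi(x,b_i,c_j))$, where $\bar{c}=(c_1,\dots,c_k)$ is a finite approximation of $\lambda$ in $N$ and $\bar{b}=(b_1,\dots,b_l)$ is a finite approximation of $\nu_N$ in $N'$ (Proposition \ref{prop:smooth}$(c)$). For the right-hand side, since $\lambda$ is smooth I would use the symmetry in Proposition \ref{prop:smooth}$(b)$ to write $(\mu\otimes\nu)\otimes\lambda=\lambda\otimes(\mu\otimes\nu)$ and apply Remark \ref{rem:easy}$(b)$ with first factor $\lambda$, obtaining a quantity $\approx_\epsilon\frac1k\sum_j(\mu\otimes\nu)(\phi(x,y,c_j))$. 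As each $c_j\in N$, Remark \ref{rem:easy}$(a)$ replaces $\nu$ by $\nu_N$ in these terms, and a second symmetrization ($\nu_N$ smooth) together with Remark \ref{rem:easy}$(b)$, now with first factor $\nu_N$, yields the double average. This direction is clean precisely because each measure being sampled ($\lambda$, then $\nu_N$) appears as the \emph{first} factor, which is exactly the setting of Remark \ref{rem:easy}$(b)$.

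For the left-hand side I would first replace $\nu$ by $\nu_N$ inside $\nu\otimes\lambda$: using that $\lambda$ is smooth, Proposition \ref{prop:smooth}$(b)$ gives $\nu\otimes\lambda=\lambda\otimes\nu$ and $\nu_N\otimes\lambda=\lambda\otimes\nu_N$, and Remark \ref{rem:easy}$(a)$ (with $\lambda$ definable over $N$ and $\nu|_N=\nu_N|_N$) shows these agree on $N$; a further application of Remark \ref{rem:easy}$(a)$ then gives $(\mu\otimes(\nu\otimes\lambda))(\phi)=(\mu\otimes(\nu_N\otimes\lambda))(\phi)$. By Corollary \ref{cor:smoothprod}, $\nu_N\otimes\lambda$ is smooth over $N'$, so Proposition \ref{prop:smooth}$(b)$ gives $\mu\otimes(\nu_N\otimes\lambda)=(\nu_N\otimes\lambda)\otimes\mu$, and Remark \ref{rem:easy}$(b)$ reduces the left-hand side to $\frac{1}{m}\sum_p\mu(\phi(x,d_p))$ for some finite approximation $\bar{d}$ of $\nu_N\otimes\lambda$.

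The main obstacle is exactly this last step: unlike the right-hand side, here I must sample the \emph{product} measure $\nu_N\otimes\lambda$ rather than its two factors in sequence. I expect the crux of the whole proof to be a lemma asserting that the ``product sample'' $\bar{b}\times\bar{c}=((b_i,c_j))_{i,j}$ is itself a $(\phi,O(\epsilon))$-approximation for $\nu_N\otimes\lambda$, so that it may serve as $\bar{d}$ above. This is the kind of step where the naive argument breaks down, since $c\mapsto\nu_N(\phi(x,c,b))$ is a measure-valued function, not a formula, and so is not controlled by a finite approximation of $\lambda$. The fix will use the smoothness of $\nu_N$ essentially: Proposition \ref{prop:smooth}$(a)$ sandwiches $\phi(y,z,w)$ between $\cL_{N'}$-formulas $\theta^-_t(y)\seq\phi(y,c,w)\seq\theta^+_t(y)$ according to a partition $\psi_t(z,w)$ with $\nu_N(\theta^+_t)-\nu_N(\theta^-_t)$ small, thereby replacing the measure-valued function by a step function built from genuine formulas $\psi_t$. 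Once this is done, approximations $\bar{b}$ (good on the finitely many $\theta^\pm_t$) and $\bar{c}$ (good on the finitely many $\psi_t$) simultaneously control both $(\nu_N\otimes\lambda)(\phi)$ and $\Av(\bar{b}\times\bar{c})(\phi)$, which gives the lemma. Using the same $\bar{b},\bar{c}$ throughout, both sides then lie within $O(\epsilon)$ of $\frac{1}{kl}\sum_{i,j}\mu(\phi(x,b_i,c_j))$, and letting $\epsilon\to0$ completes the proof.
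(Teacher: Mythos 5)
Your proposal is correct and follows essentially the same route as the paper: replace $\lambda$ and $\nu$ by smooth extensions over a tower of models, use Proposition \ref{prop:smooth}$(b)$ together with Remark \ref{rem:easy}$(a)$ to justify the substitutions, and show both sides are within $O(\epsilon)$ of the double average $\frac{1}{mn}\sum_{i,j}\mu(\phi(x,b_i,c_j))$. The key lemma you isolate (that the product sample $\bar{b}\times\bar{c}$ approximates $\hat{\nu}\otimes\hat{\lambda}$) is exactly what the paper passes over as ``a straightforward calculation,'' and your sandwich argument via Proposition \ref{prop:smooth}$(a)$ is a valid way to carry it out, though the quicker route is to commute to $\hat{\lambda}\otimes\hat{\nu}$ and apply the uniform approximations of the two factors in succession.
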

\begin{proof}
Fix an $\cL_{\cU}$-formula $\phi(x,y,z)$. We want to show 
\[
(\mu\otimes(\nu\otimes\lambda))(\phi(x,y,z))= ((\mu\otimes\nu)\otimes\lambda)(\phi(x,y,z)).
\]
 Let $M\prec\cU$ be a model such that  $\phi(x,y,z)$ is over $M$, and $\mu$ and $\nu$ are $M$-invariant. By Theorem \ref{thm:NIPsmooth}, 
there are $N\succ M$ and $\hat{\lambda}\in\kM_z(\cU)$ such that $\lambda|_M=\hat{\lambda}|_M$ and $\hat{\lambda}$ is smooth over $N$. Similarly, there is $\hat{\nu}\in\kM_y(\cU)$ such that $\nu|_{N}=\hat{\nu}|_{N}$ and $\hat{\nu}$ is smooth over some small model  containing $N$. Note that $\hat{\nu}\otimes\hat{\lambda}$  is smooth by Corollary \ref{cor:smoothprod}. 
\medskip

\noindent\emph{Claim}: $(\hat{\nu}\otimes\hat{\lambda})|_M=(\nu\otimes\lambda)|_M$. 

\noindent\emph{Proof:} 
We have
\[
(\nu\otimes\lambda)|_M=(\nu\otimes\hat{\lambda})|_M=(\hat{\lambda}\otimes\nu)|_M
=(\hat{\lambda}\otimes\hat{\nu})|_M=(\hat{\nu}\otimes\hat{\lambda})|_M,
\]
where the first and third equalities use Remark \ref{rem:easy}$(a)$, while the second and fourth use Proposition \ref{prop:smooth}$(b)$.\claim

\medskip

Now fix some $\epsilon>0$. Let $\phi_1(z;x,y)$, $\phi_2(y;x,z)$, and $\phi_3(y,z;x)$ denote various partitions of the variables in $\phi(x,y,z)$ into object and parameter variables. By Proposition \ref{prop:smooth}$(c)$, we may let $\cbar\in (N^z)^n$ be a $(\phi_1(z;x,y),\epsilon)$-approximation for $\hat{\lambda}$, and let $\bbar\in (\cU^y)^m$ be a $(\phi_2(y;x,z),\epsilon)$-approximation for $\hat{\nu}$.  A straightforward calculation then shows that $(b_i,c_j)_{i\leq m,j\leq n}$ is a $(\phi_3(y,z;x),2\epsilon)$-approximation for $\hat{\nu}\otimes\hat{\lambda}$. Therefore we have
\begin{multline*}
\textstyle(\mu\otimes(\nu\otimes\lambda))(\phi(x,y,z))=(\mu\otimes(\hat{\nu}\otimes\hat{\lambda}))(\phi(x,y,z))\\
\textstyle=((\hat{\nu}\otimes\hat{\lambda})\otimes\mu)(\phi(x,y,z))\approx_{2\epsilon}\frac{1}{mn}\sum\limits_{i=1}^m\sum\limits_{j=1}^n\mu(\phi(x,b_i,c_j)),
\end{multline*}
where the first equality uses Remark \ref{rem:easy}$(a)$ (and the Claim), the second equality uses Proposition \ref{prop:smooth}$(b)$ (and smoothness of $\hat{\nu}\otimes\hat{\lambda}$), and the final approximation uses Remark \ref{rem:easy}$(b)$. On the other hand,
\begin{multline*}
((\mu\otimes\nu)\otimes\lambda)(\phi(x,y,z))=((\mu\otimes\nu)\otimes\hat{\lambda})(\phi(x,y,z))=(\hat{\lambda}\otimes(\mu\otimes\nu))(\phi(x,y,z))\\
\textstyle\approx_\epsilon\frac{1}{n}\sum\limits_{j=1}^n(\mu\otimes\nu)(\phi(x,y,c_j))=\frac{1}{n}\sum\limits_{j=1}^n(\mu\otimes\hat{\nu})(\phi(x,y,c_j))\\
\textstyle =\frac{1}{n}\sum\limits_{j=1}^n(\hat{\nu}\otimes\mu)(\phi(x,y,c_j))\approx_\epsilon\frac{1}{mn}\sum\limits_{j=1}^n\sum\limits_{i=1}^m\mu(\phi(x,b_i,c_j)),
\end{multline*}
where the first and third equalities use Remark \ref{rem:easy}$(a)$, the second and fourth equalities use Proposition \ref{prop:smooth}$(b)$, and the approximations use Remark \ref{rem:easy}$(b)$. 

Altogether, $(\mu\otimes(\nu\otimes\lambda))(\phi(x,y,z))\approx_{4\epsilon} ((\mu\otimes\nu)\otimes\lambda)(\phi(x,y,z))$. Since $\epsilon>0$ was arbitrary, we have $(\mu\otimes(\nu\otimes\lambda))(\phi(x,y,z))= ((\mu\otimes\nu)\otimes\lambda)(\phi(x,y,z))$. 
\end{proof}

\section{Final Remarks}\label{sec:extra}

\subsection{Alternate proof via associativity for smooth measures}
A quicker summary of the proof of associativity for invariant types in arbitrary theories is as follows. Fix $p\in S_x(\cU)$, $q\in S_y(\cU)$, and $r\in S_z(\cU)$, and assume that $p$ and $q$ are invariant. Let $M\prec\cU$ be an arbitrary model such that $p$ and $q$ are $M$-invariant, and choose $c\models r|_M$, $b\models q|_{Mc}$, and $a\models p|_{Mbc}$. Then one easily shows that $((p\otimes q)\otimes r)|_M=(p\otimes(q\otimes r))|_M=\tp(a,b,c/M)$ (see also \cite[Fact 2.20]{NIPguide}). This argument essentially reduces associativity to the case of isolated  types. We can make an analogous reduction in the case of measures, and thus provide a proof of Theorem \ref{thm:main} that avoids explicit use of the fact that smooth measures are finitely approximated (Proposition \ref{prop:smooth}$(c)$). We only need one general fact about associativity in the presence of a smooth measure.

\begin{proposition}\label{prop:smoothassoc}
Suppose $\mu\in\kM_x(\cU)$, $\nu\in\kM_y(\cU)$, and $\lambda\in\kM_z(\cU)$ are such that $\mu$ is smooth, and $\nu$ and $\mu\otimes\nu$ are Borel-definable. Then $(\mu\otimes\nu)\otimes\lambda=\mu\otimes(\nu\otimes\lambda)$.
\end{proposition}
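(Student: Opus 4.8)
The plan is to prove $(\mu\otimes\nu)\otimes\lambda=\mu\otimes(\nu\otimes\lambda)$ by testing both sides against an arbitrary $\cL_{\cU}$-formula $\phi(x,y,z)$, exploiting the smoothness of $\mu$ to collapse the product $\mu\otimes(\cdot)$ into a separated amalgam in a definable way. First I would fix $\phi(x,y,z)$ and a small model $M\prec\cU$ over which $\phi$ is defined and $\mu$ is smooth, $\nu$ is Borel-definable, and $\mu\otimes\nu$ is Borel-definable. The key structural tool is Proposition \ref{prop:smooth}$(a)$: applying it to the formula $\phi(x;y,z)$ (with $x$ the object variables and $(y,z)$ the parameters) and a given $\epsilon>0$ produces $\cL_M$-formulas $\theta^-_i(x),\theta^+_i(x),\psi_i(y,z)$ with the $\psi_i$ partitioning $\cU^{yz}$, the inclusions $\theta^-_i(x)\seq\phi(x,b,c)\seq\theta^+_i(x)$ holding whenever $\psi_i(b,c)$, and $\mu(\theta^+_i)-\mu(\theta^-_i)<\epsilon$.

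The main idea is that on each piece $\psi_i$ of the partition, the measure assigned to $\phi$ in the $x$-variable is pinned within $\epsilon$ to the constant $\mu(\theta^-_i(x))$ (equivalently $\mu(\theta^+_i(x))$), regardless of which $(b,c)$ we choose inside $\psi_i$. This is precisely the feature that makes $\mu\otimes(\cdot)$ behave like integration against a simple (locally constant) function. Concretely, I would argue that for any Borel-definable $\rho\in\kM_{yz}(\cU)$,
\[
(\mu\otimes\rho)(\phi(x,y,z))\approx_\epsilon\sum_{i=1}^n\mu(\theta^-_i(x))\,\rho(\psi_i(y,z)),
\]
by writing the defining integral $\int F^\phi_\mu\,d\rho$, splitting the domain $S_{yz}(M)$ along the clopen-up-to-measure-zero pieces determined by the $\psi_i$, and using $(ii)$–$(iii)$ to control $F^\phi_\mu$ on each piece. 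Since the right-hand side depends on $\rho$ only through the values $\rho(\psi_i(y,z))$, and since $\psi_i$ is an $\cL_M$-formula, any two measures agreeing on $\cL_M$-formulas in $(y,z)$ give the same right-hand side.

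I would then apply this to the two candidate measures $\rho_1=(\mu\otimes\nu)$ relativized correctly and $\rho_2=\nu\otimes\lambda$. More precisely, the cleanest route is to show that both $(\mu\otimes\nu)\otimes\lambda$ and $\mu\otimes(\nu\otimes\lambda)$ are, up to $\epsilon$, equal to the same simple expression $\sum_i \mu(\theta^-_i(x))\,\tau(\psi_i(y,z))$, where $\tau$ is the common measure on the $(y,z)$-variables induced by $\nu$ and $\lambda$. For $\mu\otimes(\nu\otimes\lambda)$ this is the display above with $\rho=\nu\otimes\lambda$. For $(\mu\otimes\nu)\otimes\lambda$, I would use that $\psi_i$ is a conjunction-free $\cL_M$-formula in $(y,z)$ and integrate out $\lambda$ first: the value $((\mu\otimes\nu)\otimes\lambda)(\phi)$ should match $\sum_i\mu(\theta^-_i)\,(\nu\otimes\lambda)(\psi_i)$ because the $\theta^\pm_i$ are pure $x$-formulas and $\mu$ is smooth, so $\mu\otimes\nu$ acts as the separated amalgam (Proposition \ref{prop:smooth}$(b)$) on the relevant clauses. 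Letting $\epsilon\to0$ then forces equality for $\phi$, and since $\phi$ was arbitrary, the two triple products coincide.

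The step I expect to be the main obstacle is rigorously justifying the sandwiching of $((\mu\otimes\nu)\otimes\lambda)(\phi)$ by the simple expression: unlike the symmetric $\mu\otimes(\nu\otimes\lambda)$ side, here $\mu$ is nested \emph{inside} and we must commute it past both $\nu$ and $\lambda$ while tracking the partition $\{\psi_i\}$ across the two successive integrations. The safe way to handle this is to note that the inclusions $\theta^-_i(x)\wedge\psi_i(y,z)\seq\phi(x,y,z)\wedge\psi_i(y,z)\seq\theta^+_i(x)\wedge\psi_i(y,z)$ hold \emph{as formulas}, so monotonicity of every Keisler measure (in particular of $(\mu\otimes\nu)\otimes\lambda$) gives a lower and upper bound term-by-term over the partition; then Proposition \ref{prop:smooth}$(b)$ evaluates the bounding products exactly, since $\theta^\pm_i$ and $\psi_i$ separate the $x$ and $(y,z)$ variables and $\mu$ is smooth. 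Summing over $i$ and using $(iii)$ controls the total error by $\epsilon$, matching the bound on the other side and completing the argument.
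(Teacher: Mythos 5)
Your argument is correct, but it takes a more roundabout route than the paper. The paper's proof is a one-liner in comparison: since $\mu$ is smooth, Proposition \ref{prop:smooth}$(b)$ says $\mu\otimes(\nu\otimes\lambda)$ is the \emph{unique} separated amalgam of $\mu$ and $\nu\otimes\lambda$, so it suffices to check that $(\mu\otimes\nu)\otimes\lambda$ is a separated amalgam of that pair; this is verified for $\theta(x,y,z):=\phi(x)\wedge\psi(y,z)$ by unfolding the outer integral, noting $F^\theta_{\mu\otimes\nu}=\mu(\phi(x))\cdot F^\psi_\nu$ pointwise (separated amalgam property of $\mu\otimes\nu$, again from smoothness of $\mu$), and pulling the constant $\mu(\phi(x))$ out of the integral against $\lambda$. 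Your proposal instead sandwiches an arbitrary $\phi(x,y,z)$ via the partition from Proposition \ref{prop:smooth}$(a)$ and shows both triple products land in the same interval of length $<\epsilon$; this works, and in effect re-derives the uniqueness clause of Proposition \ref{prop:smooth}$(b)$ by hand (much as the paper does in the proof of Corollary \ref{cor:smoothprod}), at the cost of extra bookkeeping. Do note, however, that the step you flag as needing care --- evaluating $((\mu\otimes\nu)\otimes\lambda)(\theta^{\pm}_i(x)\wedge\psi_i(y,z))$ as $\mu(\theta^{\pm}_i(x))\,(\nu\otimes\lambda)(\psi_i(y,z))$ --- is not an immediate citation of Proposition \ref{prop:smooth}$(b)$, which only speaks about $\mu\otimes\nu$ itself: you must still write the outer product as $\int_{S_z(M)}F^{\theta^{\pm}_i\wedge\psi_i}_{\mu\otimes\nu}\,d\lambda$ and apply the separated amalgam identity fiberwise before integrating. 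That computation is precisely the entirety of the paper's proof, so your argument contains it as a sub-step; once you have it, the $\epsilon$-approximation layer is no longer needed.
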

\begin{proof}
Let $\omega=(\mu\otimes\nu)\otimes\lambda$. Since $\mu$ is smooth, it suffices by Proposition \ref{prop:smooth}$(b)$ to show that $\omega$ is a separated amalgam of $\mu$ and $\nu\otimes\lambda$. So fix $\cL_{\cU}$-formulas $\phi(x)$ and $\psi(y,z)$, and let $\theta(x,y,z):=\phi(x)\wedge\psi(y,z)$. Fix $M\prec\cU$ such that $\theta(x,y,z)$ is over $M$, and $\mu$, $\nu$, and $\mu\otimes\nu$ are Borel-definable over $M$.  
Then
\[
\omega(\theta(x,y,z))=\int_{S_z(M)}F^\theta_{\mu\otimes\nu}\, d\lambda=\mu(\phi(x))\int_{S_z(M)}F^\psi_\nu\,d\lambda=\mu(\phi(x))(\nu\otimes\lambda)(\psi(y,z)),
\]
as desired. 
\end{proof}

\begin{remark}
In fact, the conclusion of the previous proposition holds even under the weaker assumption that $\mu$ is only definable (but the proof requires more work; see \cite[Theorem 2.18]{CGH}). In the same result from \cite{CGH}, it is also shown that if $\mu$ is definable and $\nu$ is Borel definable, then $\mu\otimes\nu$ is Borel definable. Thus the Borel definability assumption on $\mu\otimes\nu$ in Proposition \ref{prop:smoothassoc} is superfluous.
\end{remark}

Now assume $T$ is NIP, and fix measures $\mu\in\kM_x(\cU)$, $\nu\in\kM_y(\cU)$, and $\lambda\in\kM_z(\cU)$, with $\mu$ and $\nu$ invariant. Fix an $\cL_{\cU}$-formula $\phi(x,y,z)$. Let $M\prec\cU$ be a small model, such that $\phi(x,y,z)$ is over $M$, and $\mu$ and $\nu$ are invariant over $M$. By Theorem \ref{thm:NIPsmooth}, there are models $N_1\succ N_0\succ M$ and measures $\hat{\mu}\in\kM_x(\cU)$, $\hat{\nu}\in\kM_y(\cU)$, $\hat{\lambda}\in\kM_z(\cU)$ such that $\hat{\lambda}|_M=\lambda|_M$, $\hat{\nu}|_{N_0}=\nu|_{N_0}$, $\hat{\mu}|_{N_1}=\mu|_{N_1}$, $\hat{\lambda}$ is smooth over $N_0$, $\hat{\nu}$ is smooth over $N_1$, and $\hat{\mu}$ is smooth over some model containing $N_1$. By Corollary \ref{cor:smoothprod}, $\hat{\nu}\otimes\hat{\lambda}$ is smooth over $N_1$. Now, using Remark \ref{rem:easy}$(a)$ and Proposition \ref{prop:smooth}$(b)$ (as in the Claim in the proof of Theorem \ref{thm:main}), we have
\[
(\hat{\mu}\otimes\hat{\nu})|_{N_0}=(\mu\otimes\nu)|_{N_0}\makebox[.5in]{and}(\hat{\nu}\otimes\hat{\lambda})|_M=(\nu\otimes\lambda)|_M.
\]
So using Remark \ref{rem:easy}$(a)$, Proposition \ref{prop:smooth}$(b)$, and Proposition \ref{prop:smoothassoc}, we have
\begin{multline*}
((\mu\otimes\nu)\otimes\lambda)(\phi(x,y,z)) = ((\mu\otimes\nu)\otimes\hat{\lambda})(\phi(x,y,z))\\
= (\hat{\lambda}\otimes(\mu\otimes\nu))(\phi(x,y,z)) = (\hat{\lambda}\otimes(\hat{\mu}\otimes\hat{\nu}))(\phi(x,y,z))\\
= ((\hat{\mu}\otimes\hat{\nu})\otimes\hat{\lambda})(\phi(x,y,z)) = (\hat{\mu}\otimes(\hat{\nu}\otimes\hat{\lambda}))(\phi(x,y,z))\\
= ((\hat{\nu}\otimes\hat{\lambda})\otimes\hat{\mu})(\phi(x,y,z)) =((\hat{\nu}\otimes\hat{\lambda})\otimes\mu)(\phi(x,y,z))\\
= (\mu\otimes(\hat{\nu}\otimes\hat{\lambda}))(\phi(x,y,z)) = (\mu\otimes(\nu\otimes\lambda))(\phi(x,y,z)).
\end{multline*} 

Note that in the previous argument, we only needed a weaker version of Proposition \ref{prop:smoothassoc} in which all three measures are smooth. Since smooth measures are definable (by Proposition \ref{prop:smooth}$(a)$), one could instead use associativity for definable measures, which has a short and fairly elementary proof (see \cite[Proposition 2.6]{CoGa}).

\subsection{Product measures of Borel sets}
We finish with a discussion of  the proof sketch of associativity in \cite{NIPguide}. The goal is to identify a certain unexpected subtlety (namely, the potential failure of equation $(\dagger\dagger)$ below) that arises when fleshing out the details. 

Assume $T$ is NIP. Fix invariant measures $\mu\in\kM_x(\cU)$ and $\nu\in\kM_y(\cU)$, and an arbitrary measure $\lambda\in\kM_z(\cU)$. Toward proving $\mu\otimes(\nu\otimes\lambda)=(\mu\otimes\nu)\otimes\lambda$, it suffices by  \cite[Proposition 7.11]{NIPguide} to assume that $\mu$ is an invariant type $p\in S_x(\cU)$. Now fix a formula $\phi(x,y,z)$. We want to show 
\begin{equation*}
(p\otimes(\nu\otimes\lambda))(\phi(x,y,z))=((p\otimes\nu)\otimes\lambda)(\phi(x,y,z)).\tag{$\dagger$}
\end{equation*}

Choose $M\prec\cU$ containing all parameters in $\phi$, such that $p$ and $\nu$ are invariant over $M$. Since $p$ is then Borel-definable over $M$, the set 
\[
B(y,z)=\{s\in S_{yz}(M):\phi(x,b,c)\in p\text{ for some/any $(b,c)\models s$}\}
\]
is Borel. By direct computation, we have $(p\otimes(\nu\otimes\lambda))(\phi(x,y,z))=(\nu\otimes\lambda)(B)$. On the other hand,  $((p\otimes\nu)\otimes\lambda)(\phi(x,y,z))=\int_{S_z(M)}F^\phi_{p\otimes\nu}\, d\lambda$ by definition of the Morley product. 
In order to compare these two values, we need to relate $F^\phi_{p\otimes\nu}$ to the set $B$. In particular, given some $c\in \cU^z$, we can define the  ``fiber" 
\[
B(y,c)=\{q\in S_y(\cU):\tp(b,c/M)\in B(y,z)\text{ for some/any }b\models q|_{Mc}\},
\]
which itself is Borel (e.g., apply Borel-definability of $p$ to the formula $\phi(x,y,c)$). So we have a well-defined function  $F^B_\nu\colon S_z(M)\to [0,1]$ such that $F^B_\nu(r)=\nu(B(y,c))$ for some/any $c\models r$. 
\medskip

\noindent\emph{Claim.} $F^B_\nu=F^{\phi}_{p\otimes\nu}$.

\noindent\emph{Proof.} Fix $r\in S_z(M)$, $c\models r$, and $N\prec\cU$ containing $Mc$. Let $\rho\colon S_y(\cU)\to S_y(N)$ be the restriction map. Note first that $\nu|_N$ is the pushforward of $\nu$ along $\rho$ (see, e.g.,  \cite[Remark 1.2]{CGH}). Moreover, $\rho(B(y,c))$ is Borel since it is precisely the preimage of $\{1\}$ under $F^{\phi(x,y,c)}_p$ (and $p$ is Borel-definable). Finally, $B(y,c)$ is ``$N$-invariant" in the sense that $\rho^{\text{-}1}(\rho(B(y,c)))=B(y,c)$. Combining these observations, we have
\begin{multline*}
F^\phi_{p\otimes\nu}(r)=(p\otimes\nu)(\phi(x,y,c))=\int_{S_y(N)}F^{\phi(x,y,c)}_p\, d\nu=\nu|_N(\rho(B(y,c))\\
 =\nu|_N(\rho^{\text{-}1}(\rho(B(y,c)))) =\nu(B(y,c))=F^B_\nu(r).\qed
\end{multline*}

\medskip 

Altogether, $(\dagger)$ can be rewritten as the following equality:
\begin{equation*}
(\nu\otimes\lambda)(B)=\int_{S_z(M)}F^B_\nu\,d\lambda.\tag{$\dagger\dagger$}
\end{equation*}
In the sketch from \cite{NIPguide}, this equality is asserted without further detail. Note that if $B$ is \emph{clopen} (i.e., a formula), then $(\dagger\dagger)$ is precisely the definition of the Morley product. However, as noted by Krupi\'{n}ski, this does not automatically imply that the same equality should hold for arbitrary Borel sets. For example, if $B$ is only open, then a direct approach to $(\dagger\dagger)$ requires one to interchange an integral with a limit over some net of functions (via regularity of $\nu\otimes\lambda$ as a measure on $S_{yz}(\cU)$). In the setting of abstract integration, this is not always possible. 
On the other hand, Theorem \ref{thm:main} implies (\emph{a posteriori}) that $(\dagger\dagger)$ does indeed hold when $T$ is NIP and $B$ is a Borel set arising as the $\phi$-definition of an invariant global type $p\in S_x(\cU)$, for some formula $\phi(x,y,z)$. 

The obvious question at this point is what happens when $T$ is not NIP.
In fact, several issues similar to those identified in the previous discussion were also noted by the authors while considering Borel-definable measures in \emph{arbitrary} theories during the preparation of \cite{CoGa}. Ultimately, these obstacles were avoided  by narrowing the focus to definable measures, which sufficed for the main results (see \cite[Remark 2.11]{CoGa}). Following an earlier draft of this note, we resumed the general investigation of Borel-definable measures in \cite{CGH} (joint with J. Hanson).  In Section 3.3 of \cite{CGH}, we work in the theory of the random ternary relation (which is \emph{not} NIP), and construct $p\in S_1(\cU)$, $q\in S_1(\cU)$, and $\lambda\in \kM_1(\cU)$ such that $p\otimes(q\otimes \lambda)\neq (p\otimes q)\otimes\lambda$, even though all measures involved are Borel-definable. Using the same steps as above, this can be rewritten as an example where both sides of  $(\dagger\dagger)$ are well-defined, but equality does not hold.

\subsection*{Acknowledgments} The authors would like to thank Krzysztof Krupi\'{n}ski and Anand Pillay for helpful comments. The second author would also like to thank Sergei Starchenko for many important ideological conversations.


\begin{thebibliography}{1}

\bibitem{CoGa}
Gabriel Conant and Kyle Gannon, \emph{Remarks on generic stability in
  independent theories}, Ann. Pure Appl. Logic \textbf{171} (2020), no.~2,
  102736, 20. \MR{4033642}
  
  \bibitem{CGH}
  Gabriel Conant, Kyle Gannon, and James Hanson, \emph{Keisler measures in the wild}, arXiv 2103.09137, 2021.

\bibitem{HP}
Ehud Hrushovski and Anand Pillay, \emph{On {NIP} and invariant measures}, J.
  Eur. Math. Soc. (JEMS) \textbf{13} (2011), no.~4, 1005--1061. \MR{2800483}

\bibitem{HPS}
Ehud Hrushovski, Anand Pillay, and Pierre Simon, \emph{Generically stable and
  smooth measures in {NIP} theories}, Trans. Amer. Math. Soc. \textbf{365}
  (2013), no.~5, 2341--2366. \MR{3020101}

\bibitem{Keis}
H.~Jerome Keisler, \emph{Measures and forking}, Ann. Pure Appl. Logic
  \textbf{34} (1987), no.~2, 119--169. \MR{890599}

\bibitem{NIPguide}
Pierre Simon, \emph{A guide to {NIP} theories}, Lecture Notes in Logic,
  vol.~44, Association for Symbolic Logic, Chicago, IL; Cambridge Scientific
  Publishers, Cambridge, 2015. \MR{3560428}
  
\bibitem{StarBk}
Sergei Starchenko, \emph{N{IP}, {K}eisler measures and combinatorics}, no. 390,
  2017, S\'{e}minaire Bourbaki. Vol. 2015/2016. Expos\'{e}s 1104--1119,
  pp.~Exp. No. 1114, 303--334. \MR{3666030}

\end{thebibliography}
\end{document}